\numberwithin{equation}{section}
\theoremstyle{plain}
\newtheorem{theorem}{Theorem}[section]
\newtheorem{lemma}[theorem]{Lemma}
\newtheorem{corollary}[theorem]{Corollary}
\theoremstyle{definition}
\newtheorem{definition}[theorem]{Definition}
\theoremstyle{remark}
\newtheorem{case[theorem]}{Case}
\title[\parbox{14cm}{\centering{Restriction operators acting on radial functions on vector spaces over finite fields \hspace{1in}}} \quad]{Restriction operators acting on radial functions on vector spaces over finite fields }
\author{ Doowon Koh }
\address{Department of Mathematics\\
Chungbuk National University \\
Cheongju city, Chungbuk-Do 361-763 Korea}
\email{koh131@chungbuk.ac.kr}
\thanks{Key words and phrases: restriction operators, radial functions,  finite fields.\\
This research was supported by Basic Science Research Program through the National Research Foundation of Korea(NRF) funded by the Ministry of Education, Science and Technology(2012010487)
}
\subjclass[2010]{42B05, 43A32, 43A15, }
\begin{document}

\begin{abstract}  We stduy $L^p-L^r$ restriction estimates for algebraic varieties $V$ in the case when restriction operators act on radial functions in the finite field setting.  
We show that  if  the varieties $V$ lie in odd dimensional vector spaces over finite fields, then the conjectured restriction estimates are possible for all radial test functions.
In addition,  it is proved that if the varieties $V$ in even dimensions have few intersection points with the sphere of zero radius, the same conclusion as in odd dimensional case can be also obtained.
\end{abstract} 
\maketitle

\section{Introduction}

Let $V$ be a subset of ${\mathbb R}^d, d\geq 2,$ and $d\sigma$ a positive measure
supported on $V$. Then, one may ask that for which values of $p$ and $r$ does the following inequality 
$$ \|\widehat{f}\|_{L^r(V, d\sigma)}
\le C_{p,r,d} ~\|f\|_{L^p(\mathbb R^d)}\quad \text{for all} \quad 
 f \in L^p(\Bbb R^d)$$
hold? This problem is known as the restriction problem in Euclidean space  and  it was first posed by E.M. Stein in 1967.
The restriction problem for the circle and the parabola in the plane was completely solved by Zygmund (\cite{Zy74})
and the problem for cones in  three and four  dimesions was also established by Barcelo (\cite{Ba85}) and Wolff (\cite{Wo01}) respectively.
However, this problem is still open in other higher dimensions and it have been considered as one of the most important, difficult problems in harmonic analysis.
We refer  the reader to \cite{Ta04},\cite{Wo03}, \cite{St93},\cite{Bo91}, \cite{Fe70}, \cite{Ta03}  for  further discussion  and recent progress  on the restriction problem.
\vskip 0.1in
As an analog of the Euclidean restriction problem, Tao and Mockenhaupt (\cite{MT04}) recently reformulated and studied the restriction problem for various algebraic varieties in the finite field setting.
In this introduction we review the definition, a conjecture, and known results on the restriction problem for algebraic varieties in $d$-dimensional vector spaces over finite fields.
Let $\mathbb F_q^d$ be a $d$-dimensional vector space over the finite field $\mathbb F_q$ with $q$ elements.
We endow this space with a counting measure $dm$. Thus, if $f: \mathbb F_q^d \to \mathbb C$, then its integral over $\mathbb F_q^d$ is given by
$$ \int_{\mathbb F_q^d} f(m) dm= \sum_{m\in \mathbb F_q^d} f(m).$$
We denote by $\mathbb F_{q*}^d$ the dual space of $\mathbb F_q^d.$ We endow the dual space $\mathbb F_{q*}^d$ with a normalized counting measure $dx$. Hence, given a function $g:\mathbb F_{q*}^d \to \mathbb C$, we define its integral
$$\int_{\mathbb F_{q*}^d} g(x) dx =\frac{1}{q^d} \sum_{x\in \mathbb F_{q*}^d} g(x).$$
Recall that the space $\mathbb F_q^d$ is isomorphic to its dual space $\mathbb F_{q*}^d$ as an abstract group. Also recall that if $f$ is a complex-valued function on $\mathbb F_{q}^d$, its Fourier transform, denoted by $\widehat{f}$, is actually defined on its dual space $\mathbb F_{q^*}^d$:
$$ \widehat{f}(x)=\int_{\mathbb F_q^d} \chi(-m\cdot x) f(m) dm= \sum_{m\in \mathbb F_q^d} \chi(-m\cdot x) f(m),$$
where $\chi$ denotes a nontrivial additive character of $\mathbb F_q.$ Let $V$ be an algebraic variety in the dual space $\mathbb F_{q*}^d.$  Throughtout the paper we always assume that $|V|\sim q^{d-1}.$
Namely, we view the variety $V$ as a hypersurface in $\mathbb F_{q*}^d.$
Recall that a normalized surface measure on $V$, denoted by $d\sigma$, is defined by the relation
$$ \int g(x) d\sigma(x) = \frac{1}{|V|}\sum_{x\in V} g(x),$$
where $g:\mathbb F_{q*}^d \to \mathbb C.$
With notation above, the restriction problem for the variety $V$ is to determine $1\leq p, r\leq \infty$ such that
the following restriction estimate holds:
\begin{equation}\label{restriction} \|\widehat{f}\|_{L^r(V, d\sigma)} \leq C \|f\|_{L^p(\mathbb F_q^d, dm)} \quad\mbox{for all functions}~~f:\mathbb F_q^d \to \mathbb C,\end{equation}
where the constant $C>0$ is independent of functions $f$ and the size of the underlying finite field $\mathbb F_q.$ We shall use the notation $R(p\to r)\lesssim 1$ to indicate that the restriction estimate (\ref{restriction}) holds.
 By duality, the inequality (\ref{restriction}) is same as the following extension estimate:
$$ \|(gd\sigma)^\vee\|_{L^{p^\prime}(\mathbb F_q^d, dm)} \leq C \|g\|_{L^{r^\prime}(V, d\sigma)}.$$ 
Mockenhaupt and Tao (\cite{MT04}) observed that the necessary conditions for the inequality (\ref{restriction}) take
\begin{equation}\label{conjecture} 1\leq p, r\leq \infty, \quad \frac{1}{p}\geq \frac{d+1}{2d} \quad \mbox{and}\quad \frac{d}{p}+\frac{d-1}{r}\geq d.\end{equation}
Namely, $R(p\to r)\lesssim 1$ only if $(1/p, 1/r)$ lies in the convex hull of points
\begin{equation}\label{necessary}
(1,0), (1,1), \left(\frac{d+1}{2d}, 1\right), ~~\mbox{and}~~ \left( \frac{d+1}{2d}, \frac{1}{2}\right).
\end{equation}
They also proved that the necessary conditions (\ref{necessary}) are in fact sufficient  for $A(p\to r)\lesssim 1$ if $V$ is the parabola in $\mathbb F_{q*}^2.$ In \cite {KS12}, Koh and Shen generalized the result to general algebraic cuves in two dimensions. However, in higher dimensions the restriction problem has not been solved and the known results are even weaker than those in Euclidean space.
The currently best known results on restriction problems for paraboloids in $\mathbb F_{q*}^d$ are due to A. Lewko and M.Lewko (\cite{LL10}). They established certain endpoint restriction estimates for paraboloids, which slightly improve on the previously known results  by Mockenhaupt and Tao(\cite{MT04}) in three dimensions and those by Iosevich and Koh (\cite{IK09}) in higher dimensions. More precisely, the following theorem was proved by them.
\begin{theorem}\label{Lewko} Let $V=\{x\in \mathbb F_{q^*}^d: x_1^2+\cdots+ x_{d-1}^2=x_d\}$ be the paraboloid.  
If $d=3$ and $-1\in \mathbb F_{q*}$ is not a square, then $A(p\to r)\lesssim 1$ whenever $(1/p, 1/r)$ lies in the convex hull of points 
$$(1,0), (1,1), (13/18 , 1), (13/18, 1/2), ~~\mbox{and}~~ (3/4, 3/8).$$
Moreover, if $d\geq 4$ is even or if $d=4k+3$ for some $k\in\mathbb N$ and $-1\in\mathbb F_{q*}$ is not a square, then  $A(p\to r)\lesssim 1$ whenever $(1/p, 1/r)$ is contained in the convex hull of points
\begin{equation}\label{best} (1,0), (1,1), \left(\frac{d^2+2d-2}{2d^2}, 1\right), \left(\frac{d^2+2d-2}{2d^2}, \frac{1}{2}\right),~~\mbox{and} ~~\left(\frac{3}{4}, \frac{d+2}{4d}\right).
\end{equation}
\end{theorem}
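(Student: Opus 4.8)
The plan is to dualize to the extension formulation, isolate the one genuinely new estimate as an $L^{4}$ endpoint, reduce it to a sharp count of additive quadruples on the paraboloid, and then fill out the convex hull by interpolation. By the duality recorded above, $A(p\to r)\lesssim 1$ is equivalent to $\|(gd\sigma)^\vee\|_{L^{p'}(\mathbb F_q^d,dm)}\lesssim\|g\|_{L^{r'}(V,d\sigma)}$. In the $d=3$ pentagon and in the region (\ref{best}) alike the decisive new vertex has first coordinate $1/p=3/4$, i.e.\ $p'=4$; so the whole theorem rests on an estimate of the shape $\|(gd\sigma)^\vee\|_{L^4(\mathbb F_q^d)}\lesssim\|g\|_{L^{r'}(V,d\sigma)}$ with the indicated $r'<2$, every other vertex then arising by interpolating this endpoint against a Stein--Tomas type $L^2$ estimate and against the trivial corners $(1,0)$ and $(1,1)$.

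First I would open the fourth power. With $(gd\sigma)^\vee(m)=|V|^{-1}\sum_{x\in V}g(x)\chi(m\cdot x)$ and the orthogonality relation $\sum_{m\in\mathbb F_q^d}\chi(m\cdot v)=q^d\,\delta_0(v)$,
$$\|(gd\sigma)^\vee\|_{L^4(\mathbb F_q^d)}^4=\frac{q^d}{|V|^{4}}\sum_{\substack{x,y,z,w\in V\\ x+y=z+w}}g(x)\,g(y)\,\overline{g(z)}\,\overline{g(w)}.$$
Parametrising the paraboloid by $t\mapsto(t,\|t\|^2)$, where $\|t\|^2=t_1^2+\cdots+t_{d-1}^2$, the constraint $x+y=z+w$ becomes $t_1+t_2=t_3+t_4$ in $\mathbb F_q^{d-1}$ together with $\|t_1\|^2+\|t_2\|^2=\|t_3\|^2+\|t_4\|^2$. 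For $q$ odd the substitution $t_1=\tfrac u2+v,\ t_2=\tfrac u2-v,\ t_3=\tfrac u2+w,\ t_4=\tfrac u2-w$ turns the quadratic constraint into $\|v\|^2=\|w\|^2$, so the unweighted quadruple count equals $q^{d-1}\sum_{r}N(r)^2$, where $N(r)=|\{t\in\mathbb F_q^{d-1}:\|t\|^2=r\}|$ is the size of the sphere of radius $r$.

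The arithmetic hypotheses should enter through the zero-radius sphere. A direct Gauss-sum evaluation of the quadratic-form zero count gives $N(r)\sim q^{d-2}$ for every $r$, so that the \emph{total} energy $\sum_r N(r)^2\sim q^{2d-3}$ is insensitive to the field; the obstruction is rather the concentration of additive quadruples on the null cone. Concretely, the extremal test function is $g=\mathbf 1_E$ with $E=V\cap\{x_d=0\}$, i.e.\ the paraboloid points lying over $\{t:\|t\|^2=0\}$: every quadruple inside $E$ automatically satisfies the height equation, so $\|(\mathbf 1_E d\sigma)^\vee\|_4$ is governed by the additive energy of the (highly structured) null cone, whose size is $N(0)$. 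When $d$ is even the form $\|\cdot\|^2$ has an odd number $d-1$ of variables and $N(0)=q^{d-2}$ carries no correction, whereas for $d$ odd the correction is $(q-1)q^{(d-3)/2}\eta((-1)^{(d-1)/2})$; this is favorable (non-positive) exactly when $d=4k+3$ and $-1$ is a non-square, and is ruinously large otherwise. I expect the main obstacle to be the upgrade from this heuristic to a genuine restricted strong-type bound, namely a uniform estimate for the number of additive quadruples inside an \emph{arbitrary} subset $E\subseteq V$ that simultaneously handles sets concentrating near the null cone, after which real interpolation against a neighbouring strong-type estimate yields the sharp $L^{r'}\to L^4$ bound.

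Finally I would assemble the region. Interpolating the $L^4$ endpoint against a Stein--Tomas type $L^2$ estimate---supplying the corner at height $1/r=1/2$---and against $(1,0)$ and $(1,1)$, and invoking both convexity of the exponent set and the monotonicity of $\|\cdot\|_{L^r(V,d\sigma)}$ in $r$ (the measure being normalized), sweeps out the full hull, with the horizontal reach $1/p=\frac{d^2+2d-2}{2d^2}$ appearing as the leftmost achievable abscissa. The three-dimensional case runs along the same lines but needs one extra input: in the plane the null cone carries even more additive structure, and extracting the strictly better endpoint $(3/4,3/8)$ (in place of the value $5/12$ the general formula would give at $d=3$) requires a sharper planar energy/incidence estimate, again available only when $-1$ is a non-square, which I would isolate as the $d=3$ substitute for the general $L^4$ bound before interpolating as above.
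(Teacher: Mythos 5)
A point of orientation first: the paper you are working from does not prove this theorem at all. It is quoted as known background, attributed to A.~Lewko and M.~Lewko \cite{LL10}, so there is no internal proof to compare against and your proposal must stand on its own. It does not, for a structural reason. Your whole plan rests on the claim that the ``decisive new vertex'' is the one with $1/p=3/4$, and that every other vertex arises ``by interpolating this endpoint against a Stein--Tomas type $L^2$ estimate and against the trivial corners.'' That is false. The Stein--Tomas exponent for the paraboloid in this setting is $1/p=(d+3)/(2d+2)$ at height $1/r=1/2$ (for $d=3$, the point $(3/4,1/2)$), whereas the theorem's vertices at height $1/2$ sit at abscissa $(d^2+2d-2)/(2d^2)$ (for $d=3$, at $13/18$), which is strictly smaller for every $d\ge 3$: indeed $(d^2+2d-2)(2d+2)=2d^3+6d^2-4<2d^3+6d^2=(d+3)(2d^2)$. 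Interpolation only produces points inside the convex hull of the estimates fed into it, so no combination of your $L^4$ endpoint, Stein--Tomas, and the trivial corners can reach the leftmost vertices; your closing assertion that the ``horizontal reach'' $(d^2+2d-2)/(2d^2)$ ``appears as the leftmost achievable abscissa'' is exactly what interpolation cannot deliver. Those $L^2$ endpoints are the genuinely new content of the Lewko--Lewko theorem, and they need a separate argument: the Mockenhaupt--Tao machinery (expanding the extension operator, using the Kloosterman-sum decay of $(d\sigma)^\vee$ together with additive-combinatorial control of arbitrary subsets of the paraboloid), plus the Lewkos' further ideas to remove the $\varepsilon$-losses present in the earlier versions of \cite{MT04} and \cite{IK09}. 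Your proposal contains no plan for any of this.

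Second, even the vertex you do attack is left open at its only hard point. Your orthogonality identity, the parametrization $x=(t,\|t\|^2)$, the reduction of the unweighted quadruple count to $q^{d-1}\sum_r N(r)^2$, and the identification of the null cone as the enemy --- with $N(0)$ carrying the correction term $(q-1)q^{(d-3)/2}\eta\bigl((-1)^{(d-1)/2}\bigr)$ exactly when $d$ is odd, which is negative precisely for $d=4k+3$ with $-1$ a non-square --- are all correct, and they do explain where the arithmetic hypotheses come from. But an $L^{r'}\to L^4$ extension estimate with $r'<2$ is not a statement about $g\equiv 1$ or about the single set $E=V\cap\{x_d=0\}$; it requires a uniform bound on weighted quadruple counts, or at least a restricted strong-type bound over arbitrary $E\subseteq V$ together with an interpolation scheme you never specify. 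You concede this yourself (``I expect the main obstacle to be the upgrade from this heuristic\dots''), and that upgrade is exactly where the work lies. As written, the proposal therefore establishes neither of the two nontrivial extreme points of the claimed region: the $L^4$ vertex is heuristic, and the $L^2$ vertex is assigned to an interpolation that cannot produce it.
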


As we shall see, our main results below imply that the restriction conjecture (\ref{conjecture}) for paraboloids in $\mathbb F_{q*}^d$ holds if the restriction operator acts on radial functions.
The main purpose of this paper provides general properties of varieties for which the restriction conjecture holds for all radial test functions.

\section{Statement of main results} While we do not know how to improve Theorem \ref{Lewko}, we are able to show that if the test functions are radial functions, then the $L^p-L^r$ restriction estimates for paraboloids hold for the conjectured exponents given in (\ref{necessary}). In fact, we shall prove more strong results.
 In order to clearly state our main theorems, let us introduce certain definitions and notation.
For each $m=(m_1,\dots, m_d)\in \mathbb F_q^d,$  define 
$$ \|m\|=m_1^2+\dots +m_d^2.$$
We say that a function $f:\mathbb F_q^d \to \mathbb C$ is a radial function if 
$$ f(m)=f(n) \quad \mbox{whenever} ~~\|m\|=\|n\|.$$
For each $j\in \mathbb F_q$, we define
\begin{equation}\label{defsphere} S_j^{d-1}=\{m=(m_1,\dots, m_d)\in \mathbb F_q^d: m_1^2+\dots+m_d^2=j\}\end{equation}
which will be named as the sphere with $j$ radius.
\begin{definition} We write $R_{rad}(p\to r)\lesssim 1$ if the restriction estimate (\ref{restriction}) holds for all radial functions $f:\mathbb F_q^d \to \mathbb C.$
\end{definition}

\subsection{Restriction results on radial functions}   Our first result below shows that  the restriction operators acting on radial functions have quite  good mapping properties.
\begin{theorem} Let d$\sigma$ be the normalized surface measure on an algebraic variety $V\subset \mathbb F_{q*}^d$ with $|V|\sim q^{d-1}.$ Then, we have
\begin{equation}\label{oddresult} R_{rad}\left(\frac{2d}{d+1}\to2\right) \lesssim 1\quad\mbox{for}~~ d\geq 3~~\mbox{odd}\end{equation}
and
\begin{equation}\label{evenresult} R_{rad}\left(\frac{2d-2}{d}\to \frac{2(d-1)^2}{d^2-2d}\right) \lesssim 1\quad \mbox{for}~~ d\geq 4~~\mbox{even}.\end{equation}
\end{theorem}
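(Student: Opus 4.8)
The plan is to exploit the fact that the Fourier transform of a radial function is again radial, which collapses the whole estimate to a one-variable problem in the ``radius''. I would write an arbitrary radial $f$ as $f=\sum_{j\in\mathbb F_q} a_j \mathbf 1_{S_j^{d-1}}$, so that $\|f\|_{L^p(dm)}^p=\sum_j |S_j^{d-1}|\,|a_j|^p\sim q^{d-1}\sum_j|a_j|^p$. Using $\mathbf 1_{S_j^{d-1}}(m)=q^{-1}\sum_{t}\chi(t(\|m\|-j))$ and completing the square in each coordinate, I would compute, for $x\neq 0$,
\[
\widehat{\mathbf 1_{S_j^{d-1}}}(x)=\frac{G^d}{q}\sum_{t\neq 0}\eta(t)^d\,\chi\!\Big(-tj-\tfrac{\|x\|}{4t}\Big)=:K(j,\|x\|),
\]
where $\eta$ is the quadratic character and $G=\sum_{s}\chi(s^2)$ is the Gauss sum with $|G|=q^{1/2}$. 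The key structural point is that this depends on $x$ only through $\|x\|$, so $\widehat f(x)=F(\|x\|)$ with $F(l)=\sum_j a_j K(j,l)$ (the value at $x=0$, which equals $\sum_j |S_j^{d-1}| a_j$, is treated separately and is harmless). Setting $N_l=|V\cap S_l^{d-1}|$, so that $\sum_l N_l=|V|\sim q^{d-1}$, the restriction norm becomes $\|\widehat f\|_{L^r(V,d\sigma)}^r=|V|^{-1}\sum_l N_l |F(l)|^r$.

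Next I would extract pointwise kernel bounds from exponential-sum estimates. For $jl\neq 0$ the inner sum is a Kloosterman sum (even $d$) or a Salié sum (odd $d$), both of modulus $\le 2q^{1/2}$, giving $|K(j,l)|\lesssim q^{(d-1)/2}$; for the degenerate pairs $j=0$ or $l=0$ the sum reduces to a Gauss sum (odd $d$) or to $-1$ (even $d$), which is at least as good. The one exception is $(j,l)=(0,0)$ in even dimensions, where $\eta(t)^d=1$ forces the sum to equal $q-1$ and hence $|K(0,0)|\sim q^{d/2}$. Thus in odd dimensions one has the clean uniform bound $|K(j,l)|\lesssim q^{(d-1)/2}$ for all $j,l$, whereas in even dimensions the same bound holds for all $(j,l)\neq(0,0)$, the single exceptional entry being the manifestation of the zero-radius sphere $S_0^{d-1}$.

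For odd $d$ the uniform bound finishes the proof by a single application of Hölder's inequality: $|F(l)|\le \|a\|_{\ell^p}\,\|K(\cdot,l)\|_{\ell^{p'}}\lesssim \|a\|_{\ell^p}\,q^{(d-1)/2+1/p'}$, whence $|V|^{-1}\sum_l N_l|F(l)|^2\lesssim \|a\|_{\ell^p}^2\, q^{(d-1)+2/p'}$ using $\sum_l N_l=|V|$. A direct computation shows that at $p=2d/(d+1)$ one has $(d-1)+2/p'=(d^2-1)/d=2(d-1)/p$, which matches $\|f\|_p^2\sim q^{2(d-1)/p}\|a\|_{\ell^p}^2$ exactly; this gives $R_{rad}(2d/(d+1)\to 2)\lesssim 1$.

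For even $d$ I would split off the bad sphere, writing $f=a_0\mathbf 1_{S_0^{d-1}}+f'$ with $f'=\sum_{j\neq 0}a_j\mathbf 1_{S_j^{d-1}}$. Since $|K(j,l)|\lesssim q^{(d-1)/2}$ for all $j\neq 0$, the odd-case argument gives the $L^2$ endpoint $R(2d/(d+1)\to 2)\lesssim 1$ for $f'$; interpolating this against the trivial estimate $\|\widehat{f'}\|_{L^\infty(d\sigma)}\le\|f'\|_{L^1(dm)}$ (the point $(1,0)$) yields every point of the critical line $d/p+(d-1)/r=d$ between $((d+1)/(2d),1/2)$ and $(1,0)$, and the target exponent $P=\big(\tfrac{d}{2(d-1)},\tfrac{d(d-2)}{2(d-1)^2}\big)$ lies on that segment, so $\|\widehat{f'}\|_{L^r(V,d\sigma)}\lesssim\|f\|_p$ at this $(p,r)$. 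For the remaining term I would use $|K(0,l)|\sim q^{d/2}$ only at $l=0$ and $|K(0,l)|\lesssim q^{(d-2)/2}$ otherwise, giving $\|a_0\widehat{\mathbf 1_{S_0^{d-1}}}\|_{L^r(V,d\sigma)}\lesssim |a_0|\big((N_0/|V|)^{1/r}q^{d/2}+q^{(d-2)/2}\big)\lesssim |a_0|\,q^{d/2}$ since $N_0\le|V|$. Because $\|f\|_p\sim q^{(d-1)/p}\|a\|_{\ell^p}=q^{d/2}\|a\|_{\ell^p}\ge q^{d/2}|a_0|$ precisely when $1/p=d/(2(d-1))$, this bad term is also $\lesssim\|f\|_p$, and summing the two contributions proves (\ref{evenresult}). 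The main obstacle throughout is the kernel estimate of the second paragraph — in particular a bound uniform over the degenerate values $j=0$ and $l=0$ — together with the recognition that in even dimensions the irreducibly large entry $K(0,0)\sim q^{d/2}$ is exactly what dictates the weaker exponent $1/p=d/(2(d-1))$ and so must be isolated rather than estimated in aggregate.
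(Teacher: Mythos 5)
Your proposal is correct and follows essentially the same route as the paper: decompose the radial function over the spheres $S_j^{d-1}$, bound $\widehat{S_j^{d-1}}$ for $x\neq(0,\dots,0)$ via Gauss--Kloosterman--Sali\'e sums (your kernel bounds coincide with Lemma \ref{lem1} and Corollary \ref{cor1}, including the single anomalous entry $\sim q^{d/2}$ at $j=\|x\|=0$ in even dimensions), treat the $x=(0,\dots,0)$ term by H\"older, and in even dimensions split off the zero-radius sphere exactly as the paper splits $j=0$ from $j\neq 0$. The only difference is bookkeeping in the even case: you reach the exponent $\bigl(\tfrac{d}{2(d-1)},\tfrac{d(d-2)}{2(d-1)^2}\bigr)$ by interpolating the $L^2$ estimate for the $j\neq 0$ part against the trivial $(1,0)$ estimate and bounding the $a_0\mathbf{1}_{S_0^{d-1}}$ piece separately via $\|f\|_p\gtrsim q^{d/2}|a_0|$, whereas the paper verifies the same two pieces directly at the target exponents by explicit exponent arithmetic -- both computations are equivalent.
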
\label{main1}
Using the nesting properties of $L^p$-norms and interpolating (\ref{oddresult}) with the trivial $L^1-L^\infty$, we see that the necessary conditions (\ref{conjecture}) for $A(p\to r)\lesssim 1$ are sufficient  for $R_{rad}(p\to r)\lesssim 1$ if the variety $V$ with $|V|\sim  q^{d-1}$  lies in odd dimensional vector spaces  finite fields.  Notice that the result of Theorem \ref{main1} in even dimensions is weaker than that in odd dimensions. However, the following theorem shows that if the variety $V$  does not contain a lot of elements in the sphere with zero radius, then the result in even dimensions can be improved to that in odd dimensions. 
\begin{theorem}\label{main2}
 Let d$\sigma$ be the normalized surface measure on an algebraic variety $V\subset \mathbb F_{q*}^d$ with $|V|\sim q^{d-1}.$  Suppose that $|V\cap S_0^{d-1}|\lesssim q^{\frac{d^2-d-1}{d}}.$ Then
$$R_{rad}\left(\frac{2d}{d+1}\to2\right) \lesssim 1\quad\mbox{for}~~ d\geq 3.$$
\end{theorem}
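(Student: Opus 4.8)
The plan is to reduce everything to an explicit computation of the Fourier transform of the spheres $S_j^{d-1}$ and then to exploit the decisive structural fact that the Fourier transform of a radial function is again radial away from the origin. Since the odd-dimensional case is already contained in Theorem \ref{main1}, the genuine content is the even-dimensional case, so I will concentrate the argument there; the odd case will reappear from the same computation with no use of the hypothesis on $|V\cap S_0^{d-1}|$. First I would write a radial test function as $f=\sum_{j\in\mathbb F_q}a_j\,\mathbf 1_{S_j^{d-1}}$, where $a_j$ is the constant value of $f$ on the sphere of radius $j$, and record that $\|f\|_{L^p(\mathbb F_q^d)}^p\sim q^{d-1}\sum_j|a_j|^p$ because $|S_j^{d-1}|\sim q^{d-1}$ for every $j$.

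Next, using $\mathbf 1[\,\|m\|=j\,]=q^{-1}\sum_{t}\chi(t(\|m\|-j))$ together with the Gauss sum evaluation $\sum_s\chi(ts^2-sx_i)=\eta(t)\,g\,\chi(-x_i^2/(4t))$, where $g=\sum_s\chi(s^2)$ has $|g|=\sqrt q$, I would derive the closed form
\[
\widehat{\mathbf 1_{S_j^{d-1}}}(x)=q^{d-1}\delta_0(x)+\frac{g^d}{q}\sum_{t\neq 0}\eta(t)^d\,\chi\!\left(-tj-\tfrac{\|x\|}{4t}\right).
\]
The key observation is that for $x\neq 0$ this depends on $x$ only through $\|x\|$; hence $\widehat f(x)=F(\|x\|)$ for a single function $F\colon\mathbb F_q\to\mathbb C$, so that (assuming as usual $0\notin V$, the single point $x=0$ being harmless otherwise)
\[
\|\widehat f\|_{L^2(V,d\sigma)}^2=\frac1{|V|}\sum_{\lambda\in\mathbb F_q}|V\cap S_\lambda^{d-1}|\,|F(\lambda)|^2 .
\]
This collapses the sum over $V$ to a weighted sum over the radii $\lambda$, which is precisely what renders the argument insensitive to the fine distribution of $V$.

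For the radii $\lambda\neq 0$ I would bound the weighted sum by $\max_{\lambda\neq0}|F(\lambda)|^2$, using only $\sum_\lambda|V\cap S_\lambda^{d-1}|=|V|$; no equidistribution of $V$ is needed. Here $F(\lambda)=\frac{g^d}{q}\sum_j a_j\,K_d(j,\lambda/4)$ with $K_d(j,\lambda/4)=\sum_{t\neq0}\eta(t)^d\chi(-tj-\lambda/(4t))$ a Kloosterman sum ($d$ even) or Salié sum ($d$ odd); in either parity the Weil bound gives $|K_d|\le 2\sqrt q$ as soon as $\lambda\neq0$. This yields $|F(\lambda)|\lesssim q^{(d-1)/2}\sum_j|a_j|$, and a single application of Hölder's inequality, combined with $\|f\|_{L^{2d/(d+1)}}^2\sim q^{(d^2-1)/d}\big(\sum_j|a_j|^{2d/(d+1)}\big)^{(d+1)/d}$, shows—after the exponents cancel exactly—that $q^{(d-1)/2}\sum_j|a_j|\lesssim\|f\|_{L^{2d/(d+1)}}$. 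Hence the contribution of all nonzero radii is $\lesssim\|f\|_{L^{2d/(d+1)}}^2$ for every $d\ge 3$ and both parities.

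The whole difficulty is concentrated in the single radius $\lambda=0$, where $K_d$ degenerates, and this is the step I expect to be the main obstacle. For $d$ odd, $\sum_{t\neq0}\eta(t)\chi(-tj)$ is again a Gauss sum of size $\sqrt q$, so $F(0)$ obeys the same bound as a generic $F(\lambda)$ and the $\lambda=0$ term costs nothing, recovering the unconditional odd-dimensional estimate. For $d$ even, however, $\eta(t)^d\equiv1$ forces $F(0)=g^d\big(a_0-q^{-1}\sum_j a_j\big)$, of size $q^{d/2}|a_0-\bar a|$ with $\bar a=q^{-1}\sum_j a_j$, a genuinely large value. The $\bar a$ part is harmless since it already gains a factor $q^{-1}$, but the $a_0$ part contributes $\sim q\,|V\cap S_0^{d-1}|\,|a_0|^2$, and comparing with $\|f\|_{L^{2d/(d+1)}}^2\gtrsim |a_0|^2\,q^{(d^2-1)/d}$ shows this is controlled precisely when $|V\cap S_0^{d-1}|\lesssim q^{(d^2-d-1)/d}$. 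This simultaneously explains the hypothesis: the extremal radial function is $f=\mathbf 1_{S_0^{d-1}}$, for which the $\lambda=0$ term alone saturates the estimate, so the stated bound on $|V\cap S_0^{d-1}|$ is exactly the borderline needed to lift the even-dimensional result up to the conjectured exponent.
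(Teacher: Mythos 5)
Your proposal is correct --- the exponent arithmetic checks out at every step --- and it rests on the same foundations as the paper's proof: the decomposition of a radial $f$ over spheres, the Gauss-sum evaluation of $\widehat{S_j^{d-1}}$ (the paper's Lemma \ref{lem1}), Kloosterman/Sali\'e bounds, and the hypothesis entering exactly to control the points of $V$ lying on $\{\|x\|=0\}$. But you organize the estimation genuinely differently. The paper splits the \emph{source} sum: it separates the $j=0$ term from the $j\neq 0$ terms by the triangle inequality, bounds the latter as in the odd-dimensional argument, and then for the $j=0$ term splits $x\in V$ according to whether $\|x\|=0$, quoting the sup bounds (\ref{decay}) and (\ref{gooddecay}). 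You instead collapse the \emph{frequency} sum first, using the radiality of $\widehat f$ away from the origin to write $\|\widehat f\|_{L^2(V,d\sigma)}^2=|V|^{-1}\sum_{\lambda}|V\cap S_\lambda^{d-1}|\,|F(\lambda)|^2$, handle all $\lambda\neq 0$ uniformly in $j$ (the $j=0$ sphere included), and then evaluate $F(0)$ exactly as $g^d\left(a_0-q^{-1}\sum_j a_j\right)$. Your route buys three things the paper's does not make visible: the estimate depends on $V$ only through the radial counts $|V\cap S_\lambda^{d-1}|$; the two parities are treated in one framework; and the exact evaluation of $F(0)$ exhibits $f=\mathbf{1}_{S_0^{d-1}}$ as the extremal function, showing the threshold $q^{\frac{d^2-d-1}{d}}$ is precisely the borderline --- a sharpness statement absent from the paper. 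The paper's route buys brevity, with all Fourier facts quoted from a single corollary. One small attribution slip on your side: for $\lambda\neq 0$ and $j=0$ the bound $|K(0,-\lambda/4)|\leq 2\sqrt q$ is not the Weil bound (which requires both arguments nonzero) but the elementary evaluation $K(0,b)=-1$ for $b\neq 0$ --- this is exactly the paper's (\ref{gooddecay}) in disguise --- so the inequality you use is true and nothing breaks; similarly, your parenthetical dismissal of the point $x=0$ is justified by the same computation the paper performs explicitly with the term $q^{r(d-1)}q^{1-d}\bigl(\sum_j M_j\bigr)^r$.
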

It seems that if the algebraic vareity $V$ does not contain $S_0^{d-1}$, the sphere of zero radius, then the conclusion of Theorem \ref{main2} holds.  For example, if $V=\{x\in \mathbb F_{q*}^d: x_1^2+\cdots+x_{d-1}^2=x_d\}$ is the paraboloid or $V=\{x\in \mathbb F_{q*}^d: x_1+\cdots+x_d=0\}$ is the plane, then $|V\cap S_0^{d-1}|\lesssim q^{d-2}< q^{\frac{d^2-d-1}{d}}.$ In this case, we threfore obtain the conclusion of Theorem \ref{main2}. This fact is very interesting in that the Fourier transform of  radial functions can be meaningfully restricted to the plane.

\section{ Fourier decay estimates on spheres}
Since the Fourier transform of a radial function can be written as a linear combination of the Fourier transforms on spheres,  the Fourier decay estimates on spheres shall play a crucial role in proving our results.
In this section, we go over the decay properties of the Fourier transform on spheres $S_j$ in $\mathbb F_q^d.$
We begin with reviewing the classical exponential sum estimates.
For each $a\in  \mathbb F_q^*,$ the Gauss sum is defined by 
$$ G_a:=\sum_{s\in \mathbb F_q^*} \eta{(s)} \chi(as),$$
where $\eta$ denotes the quadratic character of $\mathbb F_q^*.$ The Kloosterman sum is given by
$$ K(a,b):=\sum_{s\in \mathbb F_q^*} \chi(as+ bs^{-1})\quad \mbox{for}~~ a, b\in \mathbb F_q.$$
In addition, recall that  the Sali\'e sum is the exponential sum given by
$$ S(a,b):= \sum_{s\in \mathbb F_q^*} \eta{(s)} \chi(as+bs^{-1}) \quad\mbox{for}~~ a, b\in \mathbb F_q.$$
It is well known that $|G_a|=\sqrt{q} $ for $a\in \mathbb F_q^*,$  $ |K(a,b)|\leq 2 \sqrt{q}$ for $ab\neq 0,$ and $|S(a,b)|\leq 2 \sqrt{q}$ for $a, b\in \mathbb F_q$ (see p.193 in \cite{LN97} and pp.322-323 in \cite{IK04}).  In terms of the aforementioned exponential sums, the authors in \cite{IK10} expressed  the Fourier transforms on the spheres  in $\mathbb F_{q^*}^d$ with the normalized counting measure $dx$.
Modifying the normalizing factor,  one can also do the same thing for the Fourier transform on the spheres in the space $\mathbb F_q^d$ with the counting measure $dm.$
\begin{lemma}\label{lem1} 
Let $S_j^{d-1}$ be the sphere in $\mathbb F_{q}^d$ defined as in (\ref{defsphere}).
Then for any $x\in \mathbb F_{q*}^d,$ we have
$$ \widehat{S_j^{d-1}}(x) =\left\{\begin{array}{ll} q^{d-1} \delta_0(x) + q^{-1} G^d K(-j, -4^{-1}\|x\|)\quad&\mbox{for}~~d\geq 2 \quad\mbox{even}\\
                                                                                        q^{d-1} \delta_0(x) + q^{-1} G^d S(-j, -4^{-1}\|x\|)\quad&\mbox{for}~~d\geq 3 \quad\mbox{odd}\end{array}\right.$$
where $\delta_0(x)=1$ if $x=(0, \ldots, 0)$ and $\delta_0(x)=0$ otherwise.
\end{lemma}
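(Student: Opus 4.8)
The plan is to expand the indicator function of the sphere using the orthogonality of additive characters, reducing the Fourier transform to a product of factorable quadratic exponential sums, and then to evaluate these by completing the square. Concretely, since $S_j^{d-1}(m)=\frac{1}{q}\sum_{t\in\mathbb F_q}\chi(t(\|m\|-j))$, I would first write
$$\widehat{S_j^{d-1}}(x)=\sum_{m\in\mathbb F_q^d}\chi(-m\cdot x)S_j^{d-1}(m)=\frac{1}{q}\sum_{t\in\mathbb F_q}\chi(-tj)\sum_{m\in\mathbb F_q^d}\chi(t\|m\|-m\cdot x),$$
after interchanging the order of summation. The term $t=0$ factors over coordinates into $\prod_{i=1}^d\sum_{m_i}\chi(-m_ix_i)$, which equals $q^d$ when $x=0$ and $0$ otherwise; dividing by $q$ yields exactly the $q^{d-1}\delta_0(x)$ term of the claimed formula.

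For the remaining sum over $t\in\mathbb F_q^*$, the key observation is that the inner sum factors across the $d$ coordinates:
$$\sum_{m\in\mathbb F_q^d}\chi(t\|m\|-m\cdot x)=\prod_{i=1}^d\sum_{s\in\mathbb F_q}\chi(ts^2-x_is).$$
Assuming $q$ is odd, each single-variable sum is evaluated by completing the square: writing $ts^2-x_is=t(s-x_i(2t)^{-1})^2-x_i^2(4t)^{-1}$ and shifting the summation variable produces $\chi(-x_i^2(4t)^{-1})\sum_{u}\chi(tu^2)$. The standard quadratic Gauss sum identity $\sum_{u\in\mathbb F_q}\chi(tu^2)=\eta(t)G$, where $G=G_1$, then reduces each factor to $\eta(t)G\chi(-x_i^2(4t)^{-1})$.

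Taking the product over all $d$ coordinates collapses the exponents into $\|x\|$, yielding $\eta(t)^dG^d\chi(-\|x\|(4t)^{-1})$. Substituting this back, the $t\neq 0$ contribution becomes
$$\frac{G^d}{q}\sum_{t\in\mathbb F_q^*}\eta(t)^d\,\chi\left(-jt-4^{-1}\|x\|\,t^{-1}\right).$$
The last step is to split on the parity of $d$: since $\eta(t)^2=1$, when $d$ is even $\eta(t)^d=1$ and the sum is precisely the Kloosterman sum $K(-j,-4^{-1}\|x\|)$, whereas when $d$ is odd $\eta(t)^d=\eta(t)$ and the sum is the Sali\'e sum $S(-j,-4^{-1}\|x\|)$. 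This recovers the two cases of the lemma.

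The computation is essentially routine once the character-orthogonality expansion is set up; the only points requiring care are the bookkeeping of the factor $\eta(t)$ produced by the Gauss sum (and correctly identifying $G=G_1$ through the relation $G_t=\eta(t)G_1$) and the standing hypothesis that $q$ is odd, which is needed both to complete the square and to make the two-to-one structure of squaring explicit. I expect no genuine obstacle beyond carefully tracking these normalizations and signs.
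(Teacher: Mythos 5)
Your proposal is correct and follows essentially the same route as the paper's own proof: orthogonality of additive characters to expand the spherical indicator, the $t=0$ term producing $q^{d-1}\delta_0(x)$, coordinate-wise completion of the square with the Gauss sum identity $\sum_u\chi(tu^2)=\eta(t)G_1$, and the parity split $\eta^d=1$ or $\eta$ yielding the Kloosterman or Sali\'e sum. Your explicit tracking of the odd-characteristic hypothesis and the square-completion identity is slightly more careful than the paper's write-up (which even has a harmless typo, $\chi(-x_k^2/(4s^2))$ instead of $\chi(-x_k^2/(4s))$), but the argument is the same.
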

\begin{proof} We follows the same arguments as in Lemma 4 in  \cite{IK10}.
By the definition of the Fourier transform of a function on $\mathbb F_q^d$  with the counting measure $dm$,  if $S^{d-1}_j\subset \mathbb F_q$ and $x\in \mathbb F_{q^*}^d,$ then
$$\widehat{S_j^{d-1}}(x) =\sum_{m\in S^{d-1}_j} \chi(-x\cdot m) = \sum_{m\in \mathbb F_q^d} \chi(-x\cdot m) \delta_0(\|m\|-j).$$ Applying the orthogonality relation of $\chi$, we can write
$$ \delta_0(\|m\|-j)=q^{-1}\sum_{s\in \mathbb F_q} \chi(s(\|m\|-j))\quad \mbox{for}~~x\in \mathbb F_{q^*}^d.$$
It therefore follows that
$$\widehat{S_j^{d-1}}(x)=q^{-1}\sum_{m\in \mathbb F_q^d} \chi(-m\cdot x) + q^{-1} \sum_{s\in \mathbb F_q^*}\chi(-js) \left(\sum_{m\in \mathbb F_q^d} \chi(s\|m\|-x\cdot m) \right)$$
\begin{equation}\label{E1}=q^{d-1}\delta_0(x) + q^{-1}  \sum_{s\in \mathbb F_q^*}\chi(-js) \prod_{k=1}^d \sum_{m_k\in \mathbb F_q} \chi(sm_k^2-x_km_k). \end{equation}
Completing the square and using a change of variables, it follows that
\begin{equation}\label{E2}\sum_{m_k\in \mathbb F_q} \chi(sm_k^2-x_km_k) = \chi( -x_k^2/(4s^2))\sum_{m_k\in \mathbb F_q} \chi(sm_k^2) \quad \mbox{for}~~k=1,2,\dots, d.\end{equation}
Let $A=\{t\in \mathbb F_q^*: t~~\mbox{is a square number}\}$ and  observe that for each $s\in \mathbb F_q^*,$ 
 $$\sum_{t\in \mathbb F_q} \chi(st^2)=1+ \sum_{t\in \mathbb F_q^*}  \chi(st^2) = 1+ \sum_{t\in A} 2 \chi(st)$$
 $$=1+\sum_{t\in \mathbb F_q^*} (1+\eta(t)) \chi(st)= \sum_{t\in \mathbb F_q^*}\eta(t)\chi(st)= \eta(s) G_1.$$
Applying this equality to (\ref{E2}),  it follows from the equality (\ref{E1}) that
$$\widehat{S_j^{d-1}}(x)=q^{d-1}\delta_0(x) + q^{-1}G^d \sum_{s\in \mathbb F_q^*} \eta^d(s)\chi\left(-js+\frac{\|x\|}{-4s}\right).$$
Since $\eta^d=1$ for $d\geq 2$ even, and $\eta^d=\eta$ for $d\geq 3$ odd,  the statement of Lemma \ref{lem1} follows immediately from the definitions of the Kloosterman sum and  the Sali\'e sum.
\end{proof} 

The following corollary can be obtained by applying the estimates of the Gauss sum $G$, the Kloosterman $K(a,b),$ and  the Sali\'e sum $S(a,b)$ to Lemma \ref{lem1}.
\begin{corollary}\label{cor1} Let $d\geq 3$ be an integer. Then,
\begin{equation}\label{size}\widehat{S^{d-1}_j}(0,\dots,0)=|S^{d-1}_j|\sim q^{d-1}\quad\mbox{for}~~j\in \mathbb F_q.\end{equation}
If $d\geq 2$ and $x\in \mathbb F_{q*}^d\setminus \{(0,\dots,0)\}$  then
\begin{equation}\label{decay}|\widehat{S^{d-1}_j}(x)|\lesssim \left\{\begin{array}{ll} q^{\frac{d-1}{2}}&\quad\mbox{for} ~~d~ \mbox{odd},~  j\in \mathbb F_q\\
                                                                                               q^{\frac{d-1}{2}}&\quad\mbox{for} ~~d~ \mbox{even},~  j\neq 0\\ 
                                                                                                    q^{\frac{d}{2}}&\quad\mbox{for} ~~d~ \mbox{even}, ~ j=0. \end{array}\right.\end{equation} 
In particular, if $\|x\|\neq 0$ and $d\geq 4$ is even, then 
\begin{equation}\label{gooddecay} 
|\widehat{S_0^{d-1}}(x)|= q^{\frac{d-2}{2}}.
\end{equation}                   
\end{corollary}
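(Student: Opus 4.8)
The plan is to substitute the explicit formulas of Lemma \ref{lem1} and then read off the size of $\widehat{S_j^{d-1}}(x)$ by estimating the resulting exponential sums, using $|G|=\sqrt{q}$ together with the stated bounds $|K(a,b)|\le 2\sqrt{q}$ (for $ab\ne 0$) and $|S(a,b)|\le 2\sqrt{q}$. Since $|q^{-1}G^d|=q^{\frac{d}{2}-1}$ in every case, the entire argument reduces to determining the magnitude of the Kloosterman or Sali\'e sum appearing in each regime, and the only care required is in the degenerate situations where one of the two arguments of these sums vanishes.

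For (\ref{size}) I would set $x=(0,\dots,0)$, so that $\delta_0(x)=1$ and $\|x\|=0$, whence the second argument $-4^{-1}\|x\|$ is also zero and $\widehat{S_j^{d-1}}(0)=|S_j^{d-1}|$. For odd $d$ the formula reads $q^{d-1}+q^{-1}G^dS(-j,0)$, and $S(-j,0)=\sum_{s\in\mathbb F_q^*}\eta(s)\chi(-js)$ is a Gauss sum of modulus $\sqrt{q}$ when $j\ne 0$ and equals $\sum_{s}\eta(s)=0$ when $j=0$; either way the correction is $O(q^{\frac{d-1}{2}})$, which is negligible against $q^{d-1}$. For even $d$ one has $K(-j,0)=\sum_{s\in\mathbb F_q^*}\chi(-js)$, equal to $-1$ if $j\ne 0$ and to $q-1$ if $j=0$, so the correction is $O(q^{\frac{d}{2}})$; this is $\ll q^{d-1}$ precisely because $d\ge 3$. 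In all cases the leading term is $q^{d-1}$, giving $|S_j^{d-1}|\sim q^{d-1}$.

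For (\ref{decay}) and (\ref{gooddecay}) I take $x\ne(0,\dots,0)$, so that $\delta_0(x)=0$ and $\widehat{S_j^{d-1}}(x)=q^{-1}G^d$ times the relevant sum. For odd $d$ the Sali\'e bound holds unconditionally, so $|\widehat{S_j^{d-1}}(x)|\le q^{\frac{d}{2}-1}\cdot 2\sqrt{q}\lesssim q^{\frac{d-1}{2}}$ for every $j$. For even $d$ with $j\ne 0$: when $\|x\|\ne 0$ both arguments of $K(-j,-4^{-1}\|x\|)$ are nonzero and the bound $2\sqrt{q}$ yields $q^{\frac{d-1}{2}}$, while when $\|x\|=0$ one has $K(-j,0)=-1$, giving the even smaller $q^{\frac{d-2}{2}}$; in both subcases $\lesssim q^{\frac{d-1}{2}}$. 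For even $d$ with $j=0$ the sum is $K(0,-4^{-1}\|x\|)$: if $\|x\|\ne 0$ a change of variable $s\mapsto s^{-1}$ shows $K(0,b)=\sum_{s\in\mathbb F_q^*}\chi(bs^{-1})=-1$, producing exactly $|\widehat{S_0^{d-1}}(x)|=q^{\frac{d}{2}-1}=q^{\frac{d-2}{2}}$, which is (\ref{gooddecay}); if instead $\|x\|=0$ then $K(0,0)=q-1$ and $|\widehat{S_0^{d-1}}(x)|=q^{-1}|G^d|(q-1)\sim q^{\frac{d}{2}}$, matching the third line of (\ref{decay}).

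The one genuinely delicate point is keeping the even-dimensional $j=0$ case straight. The improved estimate (\ref{gooddecay}) hinges on recognizing that $K(0,b)$ collapses to $-1$ exactly when $b\ne 0$, i.e. when $\|x\|\ne 0$, whereas the degenerate value $K(0,0)=q-1$ at $\|x\|=0$ is precisely what forces the larger bound $q^{\frac{d}{2}}$ in the third line of (\ref{decay}). Since the generic Kloosterman bound $2\sqrt{q}$ is only valid for $ab\ne 0$, these two sub-cases must be separated by hand; everything else is a direct insertion of the known magnitudes $|G|=\sqrt{q}$, $|K|\le 2\sqrt{q}$, and $|S|\le 2\sqrt{q}$.
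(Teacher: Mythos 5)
Your proposal is correct and follows essentially the same route as the paper: substitute the explicit formulas of Lemma \ref{lem1}, then insert $|G|=\sqrt{q}$ and the bounds on the Kloosterman and Sali\'e sums, separating the degenerate cases where an argument of $K$ or $S$ vanishes. The one place where you do genuinely more than the paper, and where that extra care matters, is the subcase $d$ even, $j\neq 0$, $x\neq(0,\dots,0)$ but $\|x\|=0$: the paper justifies (\ref{decay}) by citing only the crude bound $|K(a,b)|\leq q$ when $ab=0$, which in this subcase yields $q^{\frac{d}{2}}$ and therefore does not literally give the claimed $q^{\frac{d-1}{2}}$ of the second line of (\ref{decay}); your exact evaluation $K(-j,0)=\sum_{s\in\mathbb F_q^*}\chi(-js)=-1$ closes this subcase (indeed giving the stronger $q^{\frac{d-2}{2}}$ there). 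Similarly, your exact values $K(0,b)=-1$ for $b\neq 0$ and $K(0,0)=q-1$ reproduce (\ref{gooddecay}) and the third line of (\ref{decay}) just as the paper's observations $|K(0,b)|=1$ and $|K(a,b)|\le q$ do, and your explicit computation of $S(-j,0)$ and $K(-j,0)$ in the proof of (\ref{size}) is a sharper version of the paper's bounds $|S(-j,0)|\leq 2\sqrt q$, $|K(-j,0)|\leq q$. In short: same decomposition and same key lemma, but your insistence on evaluating the degenerate sums exactly rather than bounding them makes the argument airtight in a corner the paper glosses over.
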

\begin{proof}  First, let us prove (\ref{size}).  It is clear from the definition of the Fourier transform that $ \widehat{S^{d-1}_j}(0,\dots,0)=\sum_{m\in \mathbb F_q^d} \chi(m\cdot (0,\dots,0)) S_j^{d-1}(m)=|S^{d-1}_j|.$
On the other hand,  it follows from Lemma \ref{lem1} that 
$$\widehat{S_j^{d-1}}(0,\dots,0) =\left\{\begin{array}{ll} q^{d-1} + q^{-1} G^d K(-j, 0)\quad&\mbox{for}~~d\geq 2 \quad\mbox{even}\\
                                                                                        q^{d-1} + q^{-1} G^d S(-j, 0)\quad&\mbox{for}~~d\geq 3 \quad\mbox{odd}\end{array}\right.$$

Since $|G|=\sqrt{q},~  |K(-j, 0)|\leq q$ for $j\in \mathbb F_q,$ and $|S(-j, 0)|\leq 2\sqrt{q}$ for $j\in \mathbb F_q,$ we see that if $d\geq 3$, then $q^{d-1} + q^{-1} G^d K(-j, 0)\sim  q^{d-1} + q^{-1} G^d S(-j, 0)\sim q^{d-1}.$
Thus (\ref{size}) holds.  Next, using Lemma \ref{lem1},  the conclusion (\ref{decay}) is an immdeate consequence from facts that
$ |G|\sqrt{q}, ~|K(a,b)|\leq 2\sqrt{q}$ if $ab\neq 0, ~ |K(a,b)|\leq q $ if $ab=0$,  and $|S(a,b)|\leq 2\sqrt{q}$ if $a,b\in \mathbb F_q.$  Finally,  the equality (\ref{gooddecay}) follows from Lemma \ref{lem1} and the observations that
$|G|=\sqrt{q}~,  |K(0,b)|=1 $ for $b\neq 0.$ 
\end{proof}
\section{Proofs of Theorem \ref{main1} and Theorem \ref{main2}}
In this section we shall complete the proofs of main results on restriction operators acting on radial functions.
First we will derive sufficient conditions for $R_{rad}(p\to r)\lesssim 1.$
We aim to find certain conditions on $1\leq p, r\leq \infty$ such that
$$ \|\widehat{f}\|_{L^r(V, d\sigma)}\lesssim \|f\|_{L^p(\mathbb F_q^d, dm)} \quad\mbox{for all radial functions}~~f:\mathbb F_q^d \to \mathbb C.$$
Without loss of generality, we may assume that $f$ is a nonnegative, radial function on $\mathbb F_q^d.$ Therefore, we can write
$$ f(m)=M_j \geq 0 \quad \mbox{if}~~m\in S_j^{d-1} ~~\mbox{for some}~j\in \mathbb F_q.$$
By multiplying a normalizing constant, we may also assume that $$ \|f\|_{L^{p}(\mathbb F_q^d, dm)} =1.$$
It therefore follows that
 $$1=\|f\|^{p}_{L^{p}(\mathbb F_q^d, dm)}=\sum_{m\in \mathbb F_q^d} |f(m)|^{p}=\sum_{j\in \mathbb F_q}\sum_{m\in S^{d-1}_j} M_j^{p}
=\sum_{j\in \mathbb F_q}M_j^p |S^{d-1}_j|$$
Since $|S^{d-1}_j|\sim q^{d-1}$ for $j\in \mathbb F_q$, we have
\begin{equation}\label{ass1}\sum_{j\in \mathbb F_q}M_j^{p}\sim q^{1-d}.\end{equation}
With assumptions above on the radial function $f$, it suffices to find certain conditions on $1\leq p,r\leq \infty$ such that
\begin{equation}\label{con1}\|\widehat{f}\|_{L^{r }(V, d\sigma)}^r :=\frac{1}{|V|}\sum_{x\in V} |\widehat{f}(x)|^r \lesssim 1.\end{equation}
Since $\widehat{f}(x)=\sum\limits_{m\in \mathbb F_q^d} \chi(-m\cdot x) f(m)
= \sum\limits_{j\in \mathbb F_q} \sum\limits_{m\in S_j^{d-1}} \chi(-m\cdot x) M_j,$ It follows that
$$\|\widehat{f}\|_{L^{r }(V, d\sigma)}^r=\frac{1}{|V|} \sum_{x\in V} \left| \sum_{j\in \mathbb F_q} M_j \widehat{S_j}(x)\right|^r$$
$$=\frac{1}{|V|} \sum_{x\in V\setminus\{(0,\dots,0)\}} \left| \sum_{j\in \mathbb F_q} M_j \widehat{S_j^{d-1}}(x)\right|^r + \frac{1}{|V|} \left| \sum_{j\in \mathbb F_q} M_j \widehat{S^{d-1}_j}(0,\dots,0)\right|^r .$$
Since $M_j\geq 0, |V|\sim q^{d-1},$ and $\widehat{S_j^{d-1}}(0,\dots,0) =|S_j^{d-1}|\sim q^{d-1}$ for $d\geq 3,$ we see that
$$\|\widehat{f}\|_{L^{r }(V, d\sigma)}^r \sim\frac{1}{q^{d-1}} \sum_{x\in V\setminus\{(0,\dots,0)\}} \left| \sum_{j\in \mathbb F_q} M_j \widehat{S_j^{d-1}}(x)\right|^r
+   \frac{q^{r(d-1)}}{q^{d-1}} \left( \sum_{j\in \mathbb F_q} M_j\right) ^r. $$
From Holder's inequality and (\ref{ass1}), observe that 
\begin{equation}\label{Mtest} \left( \sum_{j\in \mathbb F_q} M_j\right) ^r \leq \left(\sum_{j\in \mathbb F_q} 1^{p^\prime}\right)^{\frac{r}{p^\prime}}~\left( \sum_{j\in \mathbb F_q} M_j^{p}\right)^{\frac{r}{p}}
\sim q^{r(1-\frac{d}{p})},\end{equation}
where $p^\prime$ denotes the Holder conjugate of $p$ , that is $ p^\prime= p/(p-1).$
It follows that 
$$\|\widehat{f}\|_{L^{r }(V, d\sigma)}^r \lesssim \frac{1}{q^{d-1}} \sum_{x\in V\setminus\{(0,\dots,0)\}} \left| \sum_{j\in \mathbb F_q} M_j \widehat{S_j^{d-1}}(x)\right|^r+ 
q^{rd(1-\frac{1}{p})-d+1}.$$
Combining this with (\ref{con1}),  the sufficiet conditions on $1\leq p,r\leq \infty$ for $R_{rad}(p\to r)\lesssim 1$ are given by 
\begin{equation}\label{suf1}\frac{1}{q^{d-1}} \sum_{x\in V\setminus\{(0,\dots,0)\}} \left| \sum_{j\in \mathbb F_q} M_j \widehat{S_j^{d-1}}(x)\right|^r \lesssim 1
\end{equation}
and
\begin{equation}\label{suf2} rd(1-\frac{1}{p})-d+1\leq 0.
\end{equation}

\subsection{Proof of the first part of conclusions in Theorem \ref{main1}}
We prove  (\ref{oddresult}) of Theorem \ref{main1}. Namely, we shall prove that if $d\geq 3$ is odd, then
$$  \|\widehat{f}\|_{L^{2 }(V, d\sigma)} \lesssim \|f\|_{L^{  \frac{2d}{d+1}}(\mathbb F_q^d, dm)} \quad\mbox{for all radial functions}~~f:\mathbb F_q^d \to \mathbb C.$$
With $p= \frac{2d}{d+1}$ and $r=2$, it is enough to show that  the inequalities (\ref{suf1}), (\ref{suf2}) hold.
The inequaltiy (\ref{suf2}) follows immediately from a simple calculation that if $p= \frac{2d}{d+1}$ and $r=2,$ then $ rd(1-\frac{1}{p})-d+1=0.$ 
To prove the inequality (\ref{suf1}), recall from (\ref{decay}) in Corollary \ref{cor1} that if $d\geq 3$ is odd, then
$$ |\widehat{S_j^{d-1}}(x)|\lesssim q^{\frac{d-1}{2}} \quad \mbox{for}~~ j\in \mathbb F_q,  ~x\neq (0,\dots, 0).$$
From this fact and (\ref{Mtest}), the inequality (\ref{suf1}) can be established for $p= \frac{2d}{d+1}$ and $r=2$ by the following observation:
\begin{equation}\label{arg} \frac{1}{q^{d-1}} \sum_{x\in V\setminus\{(0,\dots,0)\}} \left| \sum_{j\in \mathbb F_q} M_j \widehat{S_j^{d-1}}(x)\right|^2\lesssim \sum_{x\in V\setminus\{(0,\dots,0)\}} \left( \sum_{j\in \mathbb F_q} M_j \right)^2 \end{equation}
$$ \lesssim |V|\left( \sum_{j\in \mathbb F_q} M_j \right)^2 \lesssim q^{d-1} q^{2(1-\frac{d+1}{2})}= 1, $$
where  we also used that $ M_j\geq 0$ and $|V|\sim q^{d-1}.$ 

\subsection{Proof of the second part of conclusions in Theorem \ref{main1}}
We prove  (\ref{evenresult}) of Theorem \ref{main1}. When $d\geq 4$ is even, we must prove $R_{rad}(p\to r)\lesssim 1$ for $p=\frac{2d-2}{d}$ and $r=\frac{2(d-1)^2}{d^2-2d}.$
As mentioned before,  it suffices to prove the inequalities (\ref{suf1}), (\ref{suf2}) for $p=\frac{2d-2}{d}$ and $r=\frac{2(d-1)^2}{d^2-2d}.$
In this case, the inequality (\ref{suf2}) is clearly true because  $rd(1-\frac{1}{p})-d+1=0.$
To prove the inequality (\ref{suf1}),  recall from (\ref{decay}) in Corollary \ref{cor1} that if $d\geq 4$ is even and $x\neq (0,\dots,0)$, then
$$|\widehat{S^{d-1}_j}(x)|\lesssim \left\{\begin{array}{ll}       q^{\frac{d-1}{2}}&\quad\mbox{for} ~~  j\neq 0\\ 
                                                                                                    q^{\frac{d}{2}}&\quad\mbox{for} ~~ j=0. \end{array}\right.$$    
From this fact, the left part of the inequality (\ref{suf1}) can be estimated as follows.
$$\frac{1}{q^{d-1}} \sum_{x\in V\setminus\{(0,\dots,0)\}} \left| \sum_{j\in \mathbb F_q} M_j \widehat{S_j^{d-1}}(x)\right|^r $$
$$\lesssim \frac{1}{q^{d-1}} \sum_{x\in V\setminus\{(0,\dots,0)\}} \left|  M_0 \widehat{S_0^{d-1}}(x)\right|^r + \frac{1}{q^{d-1}} \sum_{x\in V\setminus\{(0,\dots,0)\}}\left| \sum_{j\neq 0} M_j \widehat{S_j^{d-1}}(x)\right|^r$$
$$\lesssim  \frac{1}{q^{d-1}} q^{\frac{rd}{2}} M_0^r \left(\sum_{x\in V\setminus\{(0,\dots,0)\}} 1\right) 
+ \frac{1}{q^{d-1}} q^{\frac{r(d-1)}{2}}\left(\sum_{j\neq 0} M_j\right)^r 
\left( \sum_{x\in V\setminus\{(0,\dots,0)\}} 1\right)$$
$$\leq q^{\frac{rd}{2}} M_0^r +q^{\frac{r(d-1)}{2}}\left(\sum_{j\in \mathbb F_q} M_j\right)^r\lesssim q^{\frac{rd}{2}} M_0^r + q^{\frac{r(d-1)}{2}} q^{r(1-\frac{d}{p})}$$
where the last inequality follows from (\ref{Mtest}).
Since $M_j\geq 0$ for $j\in \mathbb F_q,$ it is clear from (\ref{ass1}) that  $ M_0\lesssim q^{\frac{1-d}{p}}.$ Thus, we have
\begin{equation}\label{mo}
M_0^r \lesssim  q^{\frac{r(1-d)}{p}}.\end{equation}
 We threrefore see that if $p=\frac{2d-2}{d}$ and $r=\frac{2(d-1)^2}{d^2-2d},$ then
$$\frac{1}{q^{d-1}} \sum_{x\in V\setminus\{(0,\dots,0)\}} \left| \sum_{j\in \mathbb F_q} M_j \widehat{S_j^{d-1}}(x)\right|^r \lesssim 
q^{\frac{rd}{2}+ \frac{r(1-d)}{p}} + q^{\frac{r(d-1)}{2}+r(1-\frac{d}{p})}=q^0+ q^{-\frac{d-1}{d(d-2)}} \lesssim 1,$$  which proves the inequality (\ref{suf1}).
Hence, we complete the proof.

\subsection{Proof of  Theorem \ref{main2}}
 Let d$\sigma$ be the normalized surface measure on an algebraic variety $V\subset \mathbb F_{q*}^d$ with $|V|\sim q^{d-1}.$ 
Assuming that $|V\cap S_0^{d-1}|\lesssim q^{\frac{d^2-d-1}{d}},$ we aim to prove that
$$R_{rad}\left(\frac{2d}{d+1}\to2\right) \lesssim 1\quad\mbox{for}~~ d\geq 3.$$
In the case when $d\geq 3$ is odd, this statement was already proved in the first part of Theorem \ref{main1} with much weaker assumptions.
Thus, we may assume that $d\geq 4$ is even.
Suppose that
\begin{equation}\label{intersection}  |V\cap S_0^{d-1}|\lesssim q^{\frac{d^2-d-1}{d}}.\end{equation}
As before, our task is to prove that the inequalities (\ref{suf1}), (\ref{suf2}) hold for $p=\frac{2d}{d+1}$ and $r=2.$
As mentioned before, the inequality (\ref{suf2}) clearly holds for  $p=\frac{2d}{d+1}$ and $r=2.$
To prove the inequality (\ref{suf1}), let
$$\mbox{L}:=\frac{1}{q^{d-1}} \sum_{x\in V\setminus\{(0,\dots,0)\}} \left| \sum_{j\in \mathbb F_q} M_j \widehat{S_j^{d-1}}(x)\right|^r $$
and show that $L\lesssim 1.$
It follows that
$$\mbox{L}\lesssim \frac{1}{q^{d-1}} \sum_{x\in V\setminus\{(0,\dots,0)\}} M_0^r  \left|  \widehat{S_0^{d-1}}(x)\right|^r + 
 \frac{1}{q^{d-1}} \sum_{x\in V\setminus\{(0,\dots,0)\}}\left| \sum_{j\neq 0} M_j \widehat{S_j^{d-1}}(x)\right|^r:=\mbox{R} + \mbox{M}. $$
It suffices to prove that  for $p= \frac{2d}{d+1}$ and $r=2$,
\begin{equation}\label{goal1}\mbox{R}=\frac{1}{q^{d-1}} \sum_{x\in V\setminus\{(0,\dots,0)\}} M_0^r  \left|  \widehat{S_0^{d-1}}(x)\right|^r \lesssim 1
\end{equation}
and
\begin{equation}\label{goal2}\mbox{M}= \frac{1}{q^{d-1}} \sum_{x\in V\setminus\{(0,\dots,0)\}}\left| \sum_{j\neq 0} M_j \widehat{S_j^{d-1}}(x)\right|^r\lesssim 1.
\end{equation}
The inequality (\ref{goal2}) follows immediately from the same argument in (\ref{arg}).
To prove the inequality (\ref{goal1}), we write
$$ \mbox{R}= \frac{1}{q^{d-1}} \sum_{x\in V\setminus\{(0,\dots,0)\}: \|x\|=0} M_0^r  \left|  \widehat{S_0^{d-1}}(x)\right|^r  
+\frac{1}{q^{d-1}} \sum_{x\in V\setminus\{(0,\dots,0)\}:\|x\|\neq 0} M_0^r  \left|  \widehat{S_0^{d-1}}(x)\right|^r.$$
Since $d\geq 4$ is even,  the application of  (\ref{decay}) and (\ref{gooddecay}) in Corollary \ref{cor1} yield that
$$ \mbox{R}\lesssim \frac{M_0^r }{q^{d-1}}q^{\frac{rd}{2}} |V\cap S^{d-1}_0| +  \frac{M_0^r }{q^{d-1}}q^{\frac{r(d-2)}{2}} |V|.$$
By (\ref{mo}) and our assumption that $|V\cap S^{d-1}_0|\lesssim q^{\frac{d^2-d-1}{d}}$, we see that if $p=\frac{2d}{d+1}$ and $r=2$, then
$$ \mbox{R}\lesssim q^{\frac{r(1-d)}{p}+ \frac{rd}{2}-\frac{1}{d}} + q^{\frac{r(1-d)}{p}+\frac{r(d-2)}{2}}=q^0 + q^{\frac{1-2d}{d}} \lesssim 1 .$$
 The proof of Theorem \ref{main2} is complete.

\end{document}